\documentclass[12pt,psamsfonts,twoside,leqno]{article}


\usepackage{amsbsy,hyperref,amsmath,amsthm,amsfonts,graphicx,amssymb,mathtools}
\usepackage[all,arc]{xy}
\usepackage{enumerate}
\usepackage{mathrsfs}
\usepackage{breqn}

\pagestyle{myheadings}
\markboth{A. Biswas}{Asymptotic behavior of higher Mahler measure}

\newtheorem{thm}{Theorem}[section]

\newtheorem{lem}[thm]{Lemma}

\newtheorem{rem}[thm]{Remark}

\theoremstyle{definition}
\newtheorem{defn}[thm]{Definition}

\theoremstyle{remark}

\makeatletter
\let\c@equation\c@thm
\makeatother
\numberwithin{equation}{section}

\bibliographystyle{plain}

\everymath=\expandafter{\the\everymath\displaystyle}

\usepackage{titlesec}

\titleformat{\section}
  {\normalfont\fontfamily{cmr}\fontsize{11}{12}\bfseries}{\thesection}{1em}{}

\numberwithin{equation}{section}
\frenchspacing
\textwidth=13.5cm
\textheight=23cm
\parindent=16pt
\oddsidemargin=-0.5cm
\evensidemargin=-0.5cm
\topmargin=-0.5cm


\begin{document}

\baselineskip=17pt




\title{Asymptotic nature of higher Mahler Measure}


\author{Arunabha Biswas\\
Department of Mathematics and Statistics\\
Texas Tech University\\
Broadway \& Boston, Lubbock, TX 79409-1042, U.S.A.\\
E-mail: arunabha.biswas@ttu.edu}

\date{}

\maketitle

\renewcommand{\thefootnote}{}

\footnote{2010 \emph{Mathematics Subject Classification}: 11R06; 11M99.}

\footnote{\emph{Key words and phrases}: Mahler measure, zeta function, Dirichlet's eta function.}

\renewcommand{\thefootnote}{\arabic{footnote}}
\setcounter{footnote}{0}

\begin{abstract} 
We consider Akatsuka's zeta Mahler measure as a generating function of higher Mahler measure $\displaystyle m_k(P)$ of a polynomial $P,$ where $\displaystyle m_k(P)$ is the integral of $\displaystyle \log^{k}\left|  P  \right|$ over the complex unit circle.  Restricting ourselves to $\displaystyle P(x)=x-r$ with $\displaystyle \left| r \right|=1$ we show some new asymptotic results regarding $\displaystyle m_k(P)$, especially $\displaystyle \frac{\left| m_k(P) \right|}{k!} \rightarrow \frac{1}{\pi}$ as $\displaystyle k \rightarrow \infty.$
\end{abstract}

\section{Introduction}

\begin{defn}  Given a non-zero Laurent polynomial $\displaystyle P(x) \in \mathbb{C}[x^{\pm 1}] $ and $\displaystyle k \in \mathbb{N}$, the $k$-higher Mahler measure of $P$ (see \cite{R2}) is defined by

$$\displaystyle m_{k}(P):=\int\limits_{0}^{1} \log^{k}\left| P \left( e^{2 \pi i \theta} \right) \right|d\theta=\frac{1}{2\pi i}\int \limits_{|z|=1} \log^k|P(z)|\frac{dz}{z}.$$
\end{defn}

These $\displaystyle m_k$'s are multiples of the coefficients in the Taylor expansion of Akatsuka's zeta Mahler measure (see \cite{R1}) $$\displaystyle Z(s,P):=\int \limits^{1}_{0} \left| P \left( e^{2 \pi i \theta} \right) \right|^{s}d\theta, \hspace{.5cm} \text{that is,} \hspace{.5cm} Z(s,P)= \sum \limits_{k=0}^{\infty} \frac{m_{k}(P)}{k!}s^{k}.$$ 

\noindent For $k=0,1,2,\, \cdots,$ let $\displaystyle a_{k}(P)=m_k(P)/k!,$ so that $$\displaystyle Z(s,P)=\sum \limits_{k=0}^{\infty} a_{k}(P)s^{k}.$$   \\

In this paper we only consider polynomials of type $P(x)=x-r$ with $\left| r \right| =1$. Therefore, from now on, we use the notations $m_{k}\left( x-r \right)=m_{k}$ and $a_{k}\left( x-r \right)=a_{k}$ for simplicity.

\section{Asymptotic nature of higher Mahler measure of $(r-x)$ when $\left| r \right|=1$}
\begin{thm} Let $\displaystyle m_k$ and $\displaystyle a_k$ be as above. Then \\
\label{eq:th}

\newtheorem{theorem}[subsubsection]{Theorem} 

\begin{enumerate}[\hspace{.5cm} (a)]
\item
$
\frac{m_{k+1}}{(k+1)!}+\frac{m_k}{k!}=a_{k+1}+a_k=\mathcal{O}\left( 1/k \right),
\label{eq:O_1_k}
$

\vspace{.5cm}

\item
$
\lim \limits_{k \to \infty} \left| \frac{m_k}{k!} \right|=\lim \limits_{k \to \infty} \left| a_k \right| =\frac{1}{\pi},
\label{eq:1_pi}
$

\vspace{.5cm}

\item
$
\frac{m_{k+1}}{(k+1)!}+\frac{m_k}{k!}=a_{k+1}+a_k=o\left( 1/k \right),
\label{eq:o_1_k}
$

\vspace{.5cm}

\item
$
\lim \limits_{k \to \infty} \frac{1}{k+1}\cdot\frac{m_{k+1}}{m_{k}}=\lim \limits_{k \to \infty} \frac{a_{k+1}}{a_k}=-1.
\label{eq:-1}
$

\end{enumerate}
\end{thm}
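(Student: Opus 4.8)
The plan is to first derive a closed form for the generating series $Z(s,x-r)$ and then read off the asymptotics of $a_k$ from its pole structure. Since $|e^{2\pi i\theta}-r|$ with $|r|=1$ becomes $|e^{2\pi i\theta}-1|=2|\sin\pi\theta|$ after rotating the unit circle, one may take $r=1$, and then for $\operatorname{Re}s>-1$,
\begin{equation*}
Z(s,x-r)=2^{s}\int_{0}^{1}|\sin\pi\theta|^{s}\,d\theta=\frac{2^{s}}{\sqrt{\pi}}\cdot\frac{\Gamma\!\left(\tfrac{s+1}{2}\right)}{\Gamma\!\left(\tfrac{s}{2}+1\right)}=\frac{\Gamma(1+s)}{\Gamma\!\left(1+\tfrac{s}{2}\right)^{2}},
\end{equation*}
the second equality being the Beta integral and the third the Legendre duplication formula applied with $z=\tfrac{s+1}{2}$. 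The right-hand side continues $Z(\,\cdot\,,x-r)$ to a function meromorphic on $\mathbb{C}$, and this continuation is what I would analyze.

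Next I would locate its singularities. The numerator $\Gamma(1+s)$ contributes simple poles at $s=-1,-2,-3,\dots$ and the denominator $\Gamma(1+\tfrac{s}{2})^{2}$ double poles at $s=-2,-4,-6,\dots$, so the even negative integers are zeros and the genuine singularities are the simple poles at $s=-1,-3,-5,\dots$. The dominant one is $s=-1$; since $\operatorname{Res}_{s=-1}\Gamma(1+s)=1$ and $\Gamma(1+s/2)^{2}$ equals $\Gamma(1/2)^{2}=\pi$ there, $\operatorname{Res}_{s=-1}Z=1/\pi$. Hence $g(s):=Z(s,x-r)-\dfrac{1/\pi}{s+1}$ is holomorphic on $|s|<3$, and Cauchy's estimate gives $[s^{k}]g(s)=O(c^{k})$ for every fixed $c\in(\tfrac13,1)$. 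Expanding $\dfrac{1/\pi}{s+1}=\dfrac1\pi\sum_{k\ge0}(-1)^{k}s^{k}$ and matching coefficients yields the one estimate that drives everything:
\begin{equation*}
a_{k}=\frac{(-1)^{k}}{\pi}+O(c^{k})\qquad(k\to\infty),\qquad c\in(\tfrac13,1)\ \text{fixed},
\end{equation*}
and peeling off the pole at $s=-3$ (residue $\tfrac{1}{8\pi}$) as well sharpens this to $a_{k}=\tfrac{(-1)^{k}}{\pi}+\tfrac{(-1)^{k}}{24\pi\cdot3^{k}}+O(c^{k})$ with $c\in(\tfrac15,1)$, and more generally gives a full asymptotic expansion in powers of $3^{-k},5^{-k},\dots$.

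From $a_{k}=(-1)^{k}/\pi+O(c^{k})$ all four parts drop out. Part (b): $|a_{k}|\to1/\pi$, hence $|m_{k}/k!|\to1/\pi$. Parts (a) and (c): the leading terms cancel in $a_{k+1}+a_{k}$, leaving $a_{k+1}+a_{k}=O(c^{k})$, which is simultaneously $\mathcal{O}(1/k)$ and $o(1/k)$. Part (d): since $m_{k}=k!\,a_{k}$ one has $\tfrac{1}{k+1}\cdot\tfrac{m_{k+1}}{m_{k}}=\tfrac{a_{k+1}}{a_{k}}$; the estimate shows $a_{k}\neq0$ for large $k$, and
\begin{equation*}
\frac{a_{k+1}}{a_{k}}=\frac{-(-1)^{k}/\pi+O(c^{k})}{(-1)^{k}/\pi+O(c^{k})}\longrightarrow -1 .
\end{equation*}

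The substance is concentrated in the first two steps: producing the closed form (a short computation with the Beta integral and the duplication formula) and, above all, checking that $s=-1$ is the \emph{only} singularity in the closed disc $|s|\le1$ (equivalently that the radius of convergence is exactly $1$) and pinning down its residue — after that, (a)--(d) are bookkeeping. A more hands-on but messier route avoids the closed form: from $\log Z(s,x-r)=\sum_{n\ge2}\frac{(-1)^{n}\eta(n)}{n}s^{n}$, where $\eta(n)=(1-2^{1-n})\zeta(n)$ is Dirichlet's eta function, one obtains the recursion $k\,a_{k}=\sum_{n=2}^{k}(-1)^{n}\eta(n)\,a_{k-n}$; this permits proving (a) with a crude a priori bound on the $a_{k}$, bootstrapping to (b), and then improving to (c), but it does not deliver the ratio in (d) nearly as cleanly, so I would work with the meromorphic continuation throughout.
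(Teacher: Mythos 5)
Your proof is correct, and it takes a genuinely different — and in fact stronger — route than the paper. The paper proves (a) by a lengthy elementary estimate on the convolution recursion $a_k=\frac1k\sum_{j=0}^{k-2}a_jb_{k-1-j}$ (using monotonicity of Dirichlet's eta and a bound on $\zeta(k)-\zeta(k+1)$), then needs (a) as the boundedness hypothesis in Littlewood's Tauberian theorem to convert the Abel limit $\lim_{s\to1^-}(1-s)f(-s)=1/\pi$ into the coefficient limit (b), then proves (c) by another round of recursion estimates, and gets (d) from (b) plus sign alternation. You instead exploit the closed form $Z(s,x-r)=\Gamma(1+s)/\Gamma(1+\tfrac s2)^2$ (which the paper also has, via \cite{R2}, and uses only to compute the Abel limit) all the way: subtracting the principal part $\frac{1/\pi}{s+1}$ of the unique singularity in $|s|<3$ leaves a function holomorphic there, and Cauchy's estimate yields $a_k=\frac{(-1)^k}{\pi}+O(c^k)$ with $c<1$. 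This single estimate makes (a)--(d) immediate, replaces the paper's $o(1/k)$ in (c) by an exponentially small bound, bypasses the Tauberian theorem entirely (and with it the circular-looking dependence of (b) on (a)), and delivers a full asymptotic expansion from the further poles at $s=-3,-5,\dots$, which the paper's method cannot reach. The individual computations you rely on — the reduction to $r=1$, the Beta integral, the duplication formula, the pole/zero bookkeeping, and the residues $1/\pi$ at $s=-1$ and $1/(8\pi)$ at $s=-3$ — all check out.
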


\vspace{.5cm}

From \cite{R2} we know that for $\left| s \right| < 1$,
\begin{eqnarray}
Z(s,r-x)= \exp \left( \sum \limits_{k=2}^{\infty} \frac{(-1)^{k}\left(1-2^{1-k}\right)\zeta(k)}{k}s^{k} \right).
\label{eq:z}
\end{eqnarray}
Therefore differentiating both sides of \eqref{eq:z} with respect to $s$ we obtain

 $$\setlength\arraycolsep{0.3em}
 \begin{array}{lclcl}

		&\,& \sum \limits_{k=1}^{\infty} k\, a_{k}\, s^{k-1} &=& \frac{\partial}{\partial s} Z(s,r-x) \\[.7cm]

                &\,& & = & Z(s,r-x)\sum \limits_{k=2}^{\infty} (-1)^{k}\left(1-2^{1-k}\right)\zeta(k)\, s^{k-1}\\[.7cm]

		&\,& &=& \left(\sum \limits_{k=0}^{\infty} a_{k}s^{k}\right) \left(\sum \limits_{k=1}^{\infty} b_{k}\, s^{k}\right)\\[.7cm]

		&\,& &=& \sum \limits_{k=1}^{\infty} \left( a_{0}\, b_{k} + \sum \limits_{j=1}^{k-1}\,a_{j}\, b_{k-j} \right)\,s^{k}\\[.7cm]
\end{array}
$$ 
where $\displaystyle b_{k-1}:=(-1)^{k}\, \left( 1-2^{1-k} \right)\, \zeta(k).$ From the power series expansion of \eqref{eq:z} we already know that $a_{0}=1$. Now comparing coefficients on both the sides of the last expression we get $a_{1}=0$, $\displaystyle a_{2}=\frac{1}{2}\,a_{0}\,b_{1}=\frac{1}{4}\zeta(2)$ and for $k\geq 3$
\begin{eqnarray}
a_{k}=\frac{1}{k}\, \sum \limits_{j=0}^{k-2}a_{j}\, b_{k-1-j}\, ,  
\label{eq:a_k}
\end{eqnarray} 
\noindent where 
\begin{eqnarray}
b_{k}:=(-1)^{k+1}\, \left( 1-2^{-k} \right)\, \zeta(k+1).
\label{eq:b_k}
\end{eqnarray}

\vspace{.5cm}

\section{A few required remarks and lemmas}

\begin{rem}
It can be easily shown by induction that $a_{2k}>0$ and $a_{2k+1}<0$ for all $k\geq 1.$ It is also easy to see that $ a_k =\frac{(-1)^k}{k}\, \sum \limits_{j=0}^{k-2} \left| a_j b_{k-1-j} \right|$ for $k>1.$
\label{eq:b_alternating}
\end{rem}

\begin{rem}
Let $ B_k:=\left|b_{k} \right|$. Then $B_k \leq 1$ for all $k \geq 1,$ $B_k$ is increasing and $B_k \rightarrow 1$ as $k \rightarrow \infty.$
\label{eq:b_les_1}
\end{rem}

\noindent Notice $B_{k}=\eta(k+1)$ where $\eta(k)$ is Dirichlet's eta function. Since $\eta(k) \rightarrow 1$ as $k \rightarrow \infty$ and $\eta(k)$ is an increasing function of $k$ by \cite{eta_increasing}, $B(k) \leq1$ for all $k \geq 1,$ $B_k$ is increasing and $B_k \rightarrow 1$ as $k \rightarrow \infty.$

\begin{lem}
$\left| a_{k} \right| \leq 1$ for all $k \geq 1.$
\label{eq:a_les_1}
\end{lem}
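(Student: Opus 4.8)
The plan is to prove the bound $|a_k|\le 1$ by strong induction on $k$, using the recursion \eqref{eq:a_k} together with Remark \ref{eq:b_les_1} ($B_{k}=|b_k|\le 1$) and Remark \ref{eq:b_alternating} (which lets us replace all the terms by their absolute values). First I would check the base cases directly: $a_1=0$ and $a_2=\tfrac14\zeta(2)=\tfrac{\pi^2}{24}<1$, so the claim holds for $k=1,2$. For the inductive step, fix $k\ge 3$ and assume $|a_j|\le 1$ for all $1\le j<k$; also recall $a_0=1$. From Remark \ref{eq:b_alternating} we have
\[
|a_k|=\frac{1}{k}\sum_{j=0}^{k-2}|a_j|\,|b_{k-1-j}|=\frac{1}{k}\sum_{j=0}^{k-2}|a_j|\,B_{k-1-j}.
\]

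The key estimate is then to bound each factor: $|a_j|\le 1$ (induction hypothesis, or $a_0=1$) and $B_{k-1-j}\le 1$ by Remark \ref{eq:b_les_1}, so every summand is at most $1$. Since there are exactly $k-1$ terms in the sum (indices $j=0,\dots,k-2$), we get
\[
|a_k|\le \frac{1}{k}\sum_{j=0}^{k-2} 1=\frac{k-1}{k}<1,
\]
which completes the induction. In fact this gives the slightly stronger bound $|a_k|\le \tfrac{k-1}{k}$ for $k\ge 3$, which may be worth recording since it is what makes the sum telescope usefully later.

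I do not anticipate a genuine obstacle here; the only point requiring a little care is making sure the index bookkeeping in the recursion is right — that \eqref{eq:a_k} really has $k-1$ terms and that the shift $b_{k-1-j}$ versus $b_k$ in \eqref{eq:b_k} is applied consistently — and that the two small cases $k=1,2$ (which are governed by separate formulas rather than \eqref{eq:a_k}) are handled before invoking the general recursion. Everything else is a direct application of the two preceding remarks.
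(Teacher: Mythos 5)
Your proof is correct and follows essentially the same route as the paper: strong induction with base cases $a_1=0$, $a_2=\pi^2/24$, then bounding each of the $k-1$ summands in the recursion \eqref{eq:a_k} by $1$ via the inductive hypothesis and Remark \ref{eq:b_les_1} to obtain $|a_k|\le (k-1)/k<1$. The only cosmetic difference is that you invoke Remark \ref{eq:b_alternating} to get an equality with absolute values where the paper simply applies the triangle inequality; this changes nothing.
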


\begin{proof}
We use induction to prove this. First we see that $\left| a_{0} \right|=1\leq 1$, $\left| a_{1} \right|=0\leq 1$, and $\left| a_{2} \right|=\zeta(2)/4 =\pi^{2}/24 \leq 1.$ Now let us assume $\left| a_{j} \right| \leq 1$ for all $2<j<k$. Using this along with Remark \ref{eq:b_les_1} we get
$$\left| a_{k} \right|=\frac{1}{k}\left| \sum \limits_{j=0}^{k-2}\, a_{j}\, b_{k-1-j} \right|\leq \frac{1}{k}\,\sum \limits_{j=0}^{k-2}\, \left| a_{j}\, b_{k-1-j} \right|\leq \frac{1}{k}\, \sum \limits_{j=0}^{k-2} \, 1 =\frac{k-1}{k}<1.$$
\end{proof}

\begin{lem}
For $k\geq 4,$ $\zeta(k)-\zeta(k+1)\leq \frac{1}{k^2}.$
\label{eq:diff_zeta_k_les}
\end{lem}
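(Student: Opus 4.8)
The plan is to write the difference as a single series and bound it term by term. Starting from the Dirichlet series, we have
\[
\zeta(k)-\zeta(k+1)=\sum_{n=2}^{\infty}\left(\frac{1}{n^{k}}-\frac{1}{n^{k+1}}\right)=\sum_{n=2}^{\infty}\frac{n-1}{n^{k+1}},
\]
where the $n=1$ term drops out. The dominant contribution is the first term, $n=2$, which equals $1/2^{k+1}$. First I would isolate that term and argue that the tail $\sum_{n\ge 3}(n-1)/n^{k+1}$ is small compared with $1/k^{2}$ for $k$ large; combined with $1/2^{k+1}\le 1/k^{2}$ for $k\ge 4$ (indeed $2^{k+1}\ge k^{2}$ already for $k=4$, since $32\ge 16$, and the gap only widens), this would give the bound once the tail is controlled.

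For the tail, the clean approach is a comparison with an integral or a geometric majorant: for $n\ge 3$ one has $(n-1)/n^{k+1}\le 1/n^{k}$, so $\sum_{n\ge 3}(n-1)/n^{k+1}\le \sum_{n\ge 3}n^{-k}\le \int_{2}^{\infty}x^{-k}\,dx = \frac{1}{(k-1)2^{k-1}}$. Hence
\[
\zeta(k)-\zeta(k+1)\le \frac{1}{2^{k+1}}+\frac{1}{(k-1)2^{k-1}}=\frac{1}{2^{k-1}}\left(\frac14+\frac{1}{k-1}\right)\le \frac{1}{2^{k-1}}
\]
for $k\ge 4$ (since $\tfrac14+\tfrac1{k-1}\le \tfrac14+\tfrac13<1$). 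It then remains to check the elementary inequality $1/2^{k-1}\le 1/k^{2}$, i.e. $2^{k-1}\ge k^{2}$, for all $k\ge 4$: this holds at $k=4$ ($8<16$ — so this crude bound is actually \emph{not} quite enough at $k=4$) and, more carefully, one should either start the induction at $k=7$ where $2^{6}=64\ge 49$ and push through by induction (doubling beats the ratio $(k+1)^2/k^2\le 2$), and separately verify $k=4,5,6$ by direct numerical evaluation of $\zeta(k)-\zeta(k+1)$ against $1/k^{2}$.

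The main obstacle is thus not the asymptotics but the \emph{sharpness at small $k$}: the simple geometric majorant loses too much, so one must either keep the first two terms ($n=2$ and $n=3$) explicitly before majorizing the rest, or simply dispatch $k=4,5,6$ by hand. I would take the latter route — verify $k=4,5,6$ numerically (each is a convergent series whose value is well under $k^{-2}$), and for $k\ge 7$ use the bound $\zeta(k)-\zeta(k+1)\le 2^{-(k-1)}$ derived above together with the induction $2^{k-1}\ge k^{2}$. Everything else is routine manipulation of convergent positive series.
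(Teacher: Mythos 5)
Your argument is correct, but it takes a genuinely different route from the paper. The paper proves the lemma by induction on $k$: writing $\zeta(k-1)-\zeta(k)=\sum_{n\ge 2} n\left(n^{-k}-n^{-k-1}\right)$ and noting that $n\left(1-\tfrac1k\right)^2\ge 1$ for $n\ge 2$, $k\ge 4$, it deduces $\zeta(k)-\zeta(k+1)\le \left(1-\tfrac1k\right)^2\bigl(\zeta(k-1)-\zeta(k)\bigr)\le \left(\tfrac{k-1}{k}\right)^2\cdot\tfrac{1}{(k-1)^2}=\tfrac{1}{k^2}$, with only the single base case $k=4$ checked numerically. You instead bound the series $\sum_{n\ge 2}(n-1)n^{-k-1}$ directly by its first term plus an integral-comparison tail, obtaining $\zeta(k)-\zeta(k+1)\le 2^{-(k-1)}$, and then compare with $k^{-2}$. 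Your estimates are all valid (the tail bound $\sum_{n\ge 3}n^{-k}\le \int_2^\infty x^{-k}\,dx$ and the induction $2^{k-1}\ge k^2$ for $k\ge 7$ both check out), and you correctly identify that the exponential majorant is too weak at $k=4,5,6$, which you dispatch by direct evaluation ($\zeta(4)-\zeta(5)\approx 0.0454<\tfrac1{16}$, $\zeta(5)-\zeta(6)\approx 0.0196<\tfrac1{25}$, $\zeta(6)-\zeta(7)\approx 0.0090<\tfrac1{36}$). The trade-off: the paper's induction handles all $k\ge 4$ uniformly with one numerical check, while your direct estimate costs three numerical checks but yields the much stronger exponential decay $\zeta(k)-\zeta(k+1)=\mathcal{O}(2^{-k})$ for large $k$, more than the lemma needs.
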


\begin{proof}
We use induction to prove it. But first notice that for all $k\geq 4$ and $n\geq 2$ we have $0<\frac{\sqrt{n}}{\sqrt{n}-1}< 4 \leq k,$ from which it follows that $n\left( 1-\frac{1}{k} \right)^2\geq 1.$
For $k=4$ we see that $\zeta(4)-\zeta(5)\approx 0.045<0.0625=\frac{1}{4^2}.$ Assume the conclusion of the lemma is true for all $4< j < k,$ that means we assume it is true for $j=k-1.$ Since for all $k\geq 4$ and $n\geq 2$ we have $n\left( 1-\frac{1}{k} \right)^2\geq 1$, therefore
$$\setlength\arraycolsep{0.3em}
\begin{array}{lclcl}
&\,& \frac{1}{k^2} &=& \left( \frac{k-1}{k} \right)^{2}\cdot \frac{1}{\left( k-1 \right)^{2}}  \\[.5cm]
 &\,&  &\geq& \left( 1-\frac{1}{k}\right)^{2}\left(\zeta(k-1)-\zeta(k)\right)  \\[.5cm]
&\,&  &=& \sum \limits_{n=2}^{\infty}\, n\, \left( 1-\frac{1}{k} \right)^{2} \left( \frac{1}{n^{k}}-\frac{1}{n^{k+1}} \right)  \\[.5cm]
&\,&  &\geq& \sum \limits_{n=2}^{\infty} \left( \frac{1}{n^{k}}-\frac{1}{n^{k+1}} \right)=\zeta(k)-\zeta(k+1).
\end{array}
$$
\end{proof}

\begin{lem}
Recall $B_k=\left| b_k \right|.$ For $k> 1,$ $B_k-B_{k-1}\leq \frac{1}{k^2}.$ 
\label{eq:diff_b_k_les}
\end{lem}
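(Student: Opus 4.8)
The plan is to rewrite $B_k - B_{k-1} = \eta(k+1) - \eta(k)$ using the Dirichlet series for the eta function and to recognize it as an alternating series whose very first term already furnishes the bound.

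First I would recall, as noted right after Remark~\ref{eq:b_les_1}, that $B_k = \eta(k+1) = (1 - 2^{-k})\zeta(k+1)$, and use the representation $\eta(s) = \sum_{n=1}^{\infty} (-1)^{n-1} n^{-s}$, which is absolutely convergent at the integers $k,\,k+1 \ge 2$. Subtracting the two series term by term, the $n=1$ contribution cancels and one gets
\[
B_k - B_{k-1} \;=\; \eta(k+1) - \eta(k) \;=\; \sum_{n=2}^{\infty} (-1)^{n}\,\frac{n-1}{n^{k+1}}.
\]

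Next I would show that the terms $c_n := (n-1)/n^{k+1}$ form a positive, non-increasing sequence for $n \ge 2$ (for every $k \ge 2$). Comparing $c_n$ with $c_{n+1}$ reduces to the inequality $\frac{n-1}{n}\bigl(1 + \tfrac{1}{n}\bigr)^{k+1} \ge 1$; by Bernoulli's inequality $\bigl(1 + \tfrac{1}{n}\bigr)^{k+1} \ge 1 + \tfrac{k+1}{n}$, the left-hand side is at least $\frac{(n-1)(n+k+1)}{n^2} = 1 + \frac{k(n-1) - 1}{n^2} \ge 1$ whenever $n \ge 2$ and $k \ge 2$. Hence the standard alternating-series estimate gives $0 < \eta(k+1) - \eta(k) \le c_2 = \tfrac{1}{2^{k+1}}$.

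Finally I would invoke the elementary inequality $2^{k+1} \ge k^2$ valid for all $k \ge 2$ (an immediate induction, with base cases $k=2,3$ checked by hand), which yields $B_k - B_{k-1} \le \tfrac{1}{2^{k+1}} \le \tfrac{1}{k^2}$, as required. I do not expect a genuine obstacle: the only step needing a moment's thought is the monotonicity of $c_n$, and even that is dispatched by Bernoulli. One could instead write $B_k - B_{k-1} = -(\zeta(k) - \zeta(k+1)) + 2^{-k}\bigl(2\zeta(k) - \zeta(k+1)\bigr)$, discard the non-positive first summand, and bound the remainder crudely via $\zeta(k) \le \zeta(2)$, but that route forces one to treat several small values of $k$ separately, so the eta-series argument is the cleaner of the two.
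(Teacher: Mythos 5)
Your proof is correct and is essentially the paper's own argument: the paper likewise expands $B_k-B_{k-1}=\eta(k+1)-\eta(k)$ as the alternating series $\sum_{n\ge 2}(-1)^{n}\frac{n-1}{n^{k+1}}$, bounds it by its first term $\frac{1}{2^{k+1}}$, and compares with $\frac{1}{k^2}$. The only difference is that you explicitly verify (via Bernoulli) that the terms $\frac{n-1}{n^{k+1}}$ decrease in $n$, a point the paper leaves implicit when it discards the tail of the series.
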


\begin{proof}
$$
\begin{array}{lcl}
&\,& \frac{1}{k^2}-\left( B_k-B_{k-1} \right) = \frac{1}{k^2}-B_k+B_{k-1}\\[.5cm]

&=& \frac{1}{k^2} -\left( 1-\frac{1}{2^k} \right)\zeta(k+1)+\left( 1-\frac{1}{2^{k-1}} \right)\zeta(k)\\[.5cm]
&=& \frac{1}{k^2} -\left( 1-\frac{1}{2^{k+1}} +\frac{1}{3^{k+1}}-\frac{1}{4^{k+1}}+\cdots \right)+\left( 1-\frac{1}{2^{k}} +\frac{1}{3^{k}}-\frac{1}{4^{k}}+\cdots \right) \\[.5cm]
&=& \frac{1}{k^2} -\frac{1}{2^k}\left( 1-\frac{1}{2} \right)+\frac{1}{3^k}\left( 1-\frac{1}{3} \right)-\frac{1}{4^k}\left( 1-\frac{1}{4} \right)+\cdots \\[.5cm]
&>& \frac{1}{k^2} -\frac{1}{2^k}\left( 1-\frac{1}{2} \right) > 0 \hspace{1.5cm} \text{for all $k> 1$.}
\end{array}
$$
\end{proof}

\section{Proofs of theorems of section 2}

\noindent \textbf{Proof of Theorem \ref{eq:th}\eqref{eq:O_1_k}}

\vspace{.3cm}

\begin{proof}
Using \eqref{eq:b_k} and Lemma \ref{eq:diff_zeta_k_les}, notice that for $k-j\geq 4$\\

\noindent $\left| \frac{b_{k-j}}{k+1}+\frac{b_{k-1-j}}{k} \right|$
\begin{dmath*}
=\left| \frac{(-1)^{k-j+1}\left( 1-2^{-k+j} \right)\zeta(k-j+1)}{k+1} + \frac{(-1)^{k-j}\left( 1-2^{-k+1+j} \right)\zeta(k-j)}{k}\right|
=\left| \frac{\left( 1-2^{-k+1+j} \right)\zeta(k-j)}{k}-\frac{\left( 1-2^{-k+j} \right)\zeta(k-j+1)}{k+1} \right|
=\frac{1}{k(k+1)}\left| (k+1)\left( 1-\frac{1}{2^{k-1-j}} \right)\zeta(k-j)-k\left( 1-\frac{1}{2^{k-j}} \right)\zeta(k-j+1) \right|
=\frac{1}{k(k+1)}\left| k\left( \zeta(k-j)-\zeta(k-j+1) \right)-\frac{k}{2^{k-j}}\left( 2\zeta(k-j)-\zeta(k-j+1) \right)+ \left( 1-\frac{1}{2^{k-1-j}} \right)\zeta(k-j) \right|
\leq \frac{1}{k(k+1)}\left[ k\left( \zeta(k-j)-\zeta(k-j+1) \right)+\frac{k}{2^{k-j}}\left\{\left( \zeta(k-j)-\zeta(k-j+1) \right) +\zeta(k-j)\right\}+\left( 1-\frac{1}{2^{k-1-j}} \right)\zeta(k-j)\right]
\leq \frac{1}{k(k+1)}\left[ \frac{k}{(k-j)^2}+\frac{k}{2^{k-j}}\left\{ \frac{1}{(k-j)^2}+\zeta(2) \right\}+\zeta(2)\right]
=\frac{1}{(k+1)(k-j)^2}+\frac{1}{2^{k-j}(k+1)(k-j)^2}+\frac{\zeta(2)}{2^{k-j}(k+1)}+\frac{\zeta(2)}{k(k+1)}
\leq\frac{1}{(k+1)(k-j)^2}+\frac{1}{(k+1)(k-j)^2}+\frac{\zeta(2)}{2^{k-j}(k+1)}+\frac{\zeta(2)}{k(k+1)}
=\frac{2}{(k+1)(k-j)^2}+\frac{\zeta(2)}{2^{k-j}(k+1)}+\frac{\zeta(2)}{k(k+1)}.
\end{dmath*}

\noindent Therefore, \\

\noindent $\left| a_{k+1}+a_k \right|$
\begin{dmath*}
=\left| \frac{1}{k+1}\sum_{j=0}^{k-1}a_j\, b_{k-j}+\frac{1}{k}\sum_{j=0}^{k-2}a_j\, b_{k-1-j} \right|
=\left| \frac{a_{k-1}\, b_1}{k+1}+\sum_{j=0}^{k-2}a_j\,\left( \frac{b_{k-j}}{k+1}+\frac{b_{k-1-j}}{k}\right) \right|
\leq \frac{1}{k+1}+\sum_{j=0}^{k-2}\left| \frac{b_{k-j}}{k+1}+\frac{b_{k-1-j}}{k} \right|  \hspace{1.5cm} \text{by Remark \eqref{eq:b_les_1} and Lemma \eqref{eq:a_les_1}}
\leq \frac{1}{k+1}+\sum_{j=0}^{k-4}\left| \frac{b_{k-j}}{k+1}+\frac{b_{k-1-j}}{k} \right|+2\cdot \frac{\max\left\{ \left| b_3 \right|,\left| b_2 \right| \right\}}{k}+2\cdot \frac{\max\left\{ \left| b_2 \right|,\left| b_1 \right| \right\}}{k}
\leq \frac{1}{k+1}+\sum_{j=0}^{k-4}\left[ \frac{2}{(k+1)}\cdot \frac{1}{(k-j)^2}+\frac{\zeta(2)}{(k+1)}\cdot \frac{1}{2^{k-j}}+\frac{\zeta(2)}{k(k+1)} \right]+\frac{4}{k}
\leq \frac{5}{k}+\frac{2}{k+1}\sum_{j=0}^{k-4}\frac{1}{(k-j)^2}+\frac{\zeta(2)}{k+1}\sum_{j=0}^{k-4}\frac{1}{2^{k-j}}+\frac{\zeta(2)}{k(k+1)}\sum_{j=0}^{k-4}1
=\frac{5}{k}+\frac{2}{k+1}\left( \frac{1}{4^2}+\frac{1}{5^2}+\cdots +\frac{1}{k^2} \right)+\frac{\zeta(2)}{k+1}\left( \frac{1}{2^4}+\frac{1}{2^5}+\cdots + \frac{1}{2^k} \right)+\frac{\zeta(2)(k-3)}{k(k+1)}
\leq \frac{5}{k}+\frac{2}{k+1}\cdot \zeta(2)+\frac{\zeta(2)}{k+1}\cdot \frac{1}{1-\frac{1}{2}}+\frac{\zeta(2)}{k+1}
= \frac{5}{k}+\frac{5\zeta(2)}{k+1}
\leq \frac{5}{k}\left( 1+\zeta(2) \right).\\
\end{dmath*}

\noindent Therefore for $\displaystyle k\geq4$, $\left| a_{k+1}+a_{k} \right|\leq \frac{5}{k}\left( 1+\zeta(2) \right)$ and so $a_{k+1}+a_k=\mathcal{O}(1/k).$

\end{proof}

\vspace{.3cm}

\noindent \textbf{Proof of Theorem \ref{eq:th}\eqref{eq:1_pi}}
\begin{proof}
By definition of Akatsuka zeta Mahler measure (see \cite{R1}), the generating function $f(s)$ of $a_k$'s is nothing but $Z(s,x-r)$ with $\left| r \right| =1.$  From \cite{R2} we know that for $\left| r \right| =1$ and $\left| s \right| <1,$ 

$$f(s):=\sum \limits_{k=0}^{\infty} a_k  \, s^k=Z(s,x-r)=\frac{\Gamma(s+1)}{\Gamma ^{2}\left( \frac{s}{2}+1 \right)}=\frac{4}{s}\frac{\Gamma(s)}{\Gamma^{2}\left( \frac{s}{2} \right)}.$$ 

\noindent Define $\displaystyle F(s):=1+\sum \limits_{k=1}^{\infty}(-1)^k\left(a_{k-1}+a_{k}\right)s^{k}.$ So, $F(s)=(1-s)f(-s).$\\

\noindent Notice that $$\lim \limits_{s\rightarrow 1^{-}}F(s)=\frac{-4}{\Gamma^{2}\left( -\frac{1}{2} \right)}\lim \limits_{s\rightarrow 1^{-}} (1-s)\Gamma(-s)=\frac{-4}{\Gamma^{2}\left( -\frac{1}{2} \right)}\lim \limits_{s\rightarrow -1} (1+s)\Gamma(s)=\frac{1}{\pi},$$ since $\displaystyle \lim \limits_{s\rightarrow -1} (1+s)\Gamma(s)=-1$ and $\sqrt{\pi}=\Gamma(1/2)=(-1/2) \, \Gamma( -1/2).$ \\

\noindent Now $\{ k(-1)^{k}\left(a_k+a_{k+1}\right) \}$ is a bounded sequence by Theorem \ref{eq:th}\eqref{eq:O_1_k}. Therefore applying Littlewood's extension of Tauber's Theorem (see \cite{tauber}) on the sequence $\{ (-1)^{k} \left( a_k+a_{k+1} \right) \}$ and its generating function $F(s)-1$ we see that 

$$\lim \limits_{k\rightarrow \infty} \left| a_k \right| =  1-\sum \limits_{k=0}^{\infty} \, \{ (-1)^{k} \left( a_k+a_{k+1} \right) \} = 1+\lim \limits_{s\rightarrow 1^{-}}\left( F(s)-1\right)=\frac{1}{\pi}.$$

\end{proof}

\vspace{.3cm}

\noindent \textbf{Proof of Theorem \ref{eq:th}\eqref{eq:o_1_k}}
\begin{proof}
Recall $B_k=\left| b_k \right|$ from Lemma \ref{eq:diff_b_k_les}. Now define a new sequence $\{A_k\}$ such that $A_0=1,$ $A_1=0$ and $$A_k=\frac{1}{k}\, \sum \limits_{j=0}^{k-2}A_{j}\, B_{k-1-j} $$ for all $k\geq 2.$ A careful observation of the individual terms inside $a_k$ and $A_k$ easily shows that $A_k=\left| a_k \right|$. Clearly $A_k=\left| a_k \right| \leq 1$ by Lemma \ref{eq:a_les_1}. Let $m:=\left \lfloor (k-2)/2 \right \rfloor$ and $A:=1/\pi.$ Since $\lim \limits_{k \rightarrow \infty} A_k=1/\pi=A$, using Remark \ref{eq:b_les_1} and Lemma \ref{eq:diff_b_k_les}, we see for each $\epsilon>0$ there is a sufficiently large integer $N>0 $ such that $k>N$ implies
\begin{eqnarray}
\left|(k+1) \left( a_{k+1}+a_k \right) \right|
&=& \left|(k+1) \left( A_{k+1}-A_k \right) \right| \nonumber \\
&=& \left| \sum \limits_{j=0}^{k-1}A_{j}B_{k-j}-\sum \limits_{j=0}^{k-2}A_{j}B_{k-1-j}-A_k \right|  \nonumber \\ \label{eq:absolute}
&\leq& \left| A_{k-1}B_1-A_k + \sum \limits_{j=m+1}^{k-2}A_j \left( B_{k-j}-B_{k-1-j} \right) \right|  \\ 
&\,& + \sum \limits_{j=0}^{m}A_j\left( B_{k-j}-B_{k-1-j} \right).  \nonumber
\end{eqnarray}

Now if the object within the absolute value signs in (\ref{eq:absolute}) is positive, then 
\begin{eqnarray} 
&\,&\left|(k+1) \left( a_{k+1}+a_k \right) \right|  \nonumber \\ \label{eq:absolute_epsilon}
&\leq& \left| \left( A+\epsilon \right) B_1-\left( A-\epsilon \right) + \left( A+\epsilon \right) \!\!\! \sum \limits_{j=m+1}^{k-2} \left( B_{k-j}-B_{k-1-j} \right) \right|\\
&\,& +\sum \limits_{j=0}^{m} \frac{A_j}{(k-j)^2}  \nonumber \\
&\leq& \left| \left( A+\epsilon \right) B_1-\left( A-\epsilon \right) +  \left( A+\epsilon \right) \left( B_{k-m-1}-B_{1} \right) \right| \nonumber \\
&\,& +\frac{1}{(k-m)^2}(m+1) \nonumber 
\end{eqnarray} 

\noindent Notice $ B_{k-m-1} \rightarrow 1$ and  $ (m+1)/(k-m)^2 \rightarrow 0$ when $k \rightarrow \infty.$  Therefore  we have $$\lim \limits_{k \to \infty} \left|(k+1) \left( a_{k+1}+a_k \right) \right| \leq \left| \left( A+\epsilon \right) B_1-\left( A-\epsilon \right) +  \left( A+\epsilon \right) \left(1-B_{1} \right) \right|.$$

\noindent Since the above inequality holds for each fixed $\epsilon>0,$ it also holds for $\epsilon = 0.$ Hence we have $ \left| (k+1) \left( a_{k+1}+a_k \right) \right|  \rightarrow 0$ when $k \rightarrow \infty.$ Therefore, $a_{k+1}+a_k=o\left( 1/k \right).$

But if the object within the absolute value signs in (\ref{eq:absolute}) is negative, then a similar argument gives the same conclusion just by replacing $+\epsilon$ by $-\epsilon$ in (\ref{eq:absolute_epsilon}).
\end{proof}

\vspace{.3cm}

\noindent \textbf{Proof of Theorem \ref{eq:th}\eqref{eq:-1}}
   
\begin{proof}
From Theorem \ref{eq:th}\eqref{eq:1_pi} we know that $0<  \lim \limits_{k\rightarrow \infty} \left| a_k \right| = 1/\pi < \infty$. Now using Remark \ref{eq:b_alternating} we have

$$\lim \limits_{k\rightarrow \infty}\frac{a_{k+1}}{a_k}=-1.$$
\end{proof}

\subsection*{Acknowledgments:}  We would like to thank George E. Andrews and Chris Monico for helpful discussions.


\begin{thebibliography}{9}


\bibitem{eta_increasing}
D.~Aharonov and U.~Elias.
\newblock Improved inequalities for trigonometric functions via {D}irichlet and
  zeta functions.
\newblock {\em Math. Inequal. Appl.}, 16(3):851--859, 2013.


\bibitem{R1}
Hirotaka Akatsuka.
\newblock Zeta {M}ahler measures.
\newblock {\em J. Number Theory}, 129(11):2713--2734, 2009.


\bibitem{tauber}
Peter Duren.
\newblock {\em Invitation to classical analysis}, volume~17 of {\em Pure and
  Applied Undergraduate Texts}.
\newblock American Mathematical Society, Providence, RI, 2012.


\bibitem{R2}
N.~Kurokawa, M.~Lal{\'{\i}}n, and H.~Ochiai.
\newblock Higher {M}ahler measures and zeta functions.
\newblock {\em Acta Arith.}, 135(3):269--297, 2008.



\end{thebibliography}
\end{document}